\newtheorem{theo}{Theorem}[section]
\newtheorem{corol}[theo]{Corollary}
\newtheorem{lemme}[theo]{Lemma}
\newcommand{\dE}{\mathbb{E}}
\newcommand{\dN}{\mathbb{N}}
\newcommand{\dP}{\mathbb{P}}
\newcommand{\dR}{\mathbb{R}}
\newcommand{\rP}{\mathrm{P}}
\newcommand{\cA}{\mathcal{A}}
\newcommand{\cD}{\mathcal{D}}
\newcommand{\cE}{\mathcal{E}}
\newcommand{\cF}{\mathcal{F}}
\newcommand{\cI}{\mathcal{I}}
\newcommand{\cL}{\mathcal{L}}
\newcommand{\cP}{\mathcal{P}}
\newcommand{\cW}{\mathcal{W}}
\newcommand{\sB}{\mathscr{B}}
\newcommand{\sL}{\mathscr{L}}
\newcommand{\Ga}{\Gamma}
\newcommand{\la}{\lambda}
\newcommand{\si}{\sigma}
\newcommand{\vphi}{\varphi}
\newcommand{\pd}{{\partial}} 
\newcommand{\cleq}{\leq_{\mathrm{c}}}
\newcommand{\Var}{\operatorname{Var}}
\newcommand{\Ent}{\operatorname{Ent}}
\newcommand{\Cov}{\operatorname{Cov}}
\newcommand{\Lip}{\operatorname{Lip}}
\newcommand{\Marg}{\operatorname{Marg}}
\newcommand{\eqref}[1]{(\ref{#1})}
\begin{document}
\begin{frontmatter}

\title{Intertwining and commutation relations for birth--death processes}
\runtitle{Intertwining and commutation for birth--death processes}

\begin{aug}
\author[1]{\fnms{Djalil} \snm{Chafa\"{i}}\corref{}\thanksref{1}\ead[label=e1]{djalil@chafai.net}\ead[label=u1,url]{http://djalil.chafai.net/}} \and
\author[2]{\fnms{Ald\'eric} \snm{Joulin}\thanksref{2}\ead[label=e2]{ajoulin@insa-toulouse.fr}\ead[label=u2,url]{http://www-gmm.insa-toulouse.fr/\textasciitilde ajoulin/}}
\runauthor{D. Chafa\"{i} and A. Joulin}
\address[1]{Laboratoire d'Analyse et de Math\'ematiques Appliqu\'ees, CNRS UMR8050, Universit\'e Paris-Est Marne-la-Vall\'ee,
France. \printead{e1,u1}}
\address[2]{Institut National des Sciences
Appliqu\'ees de Toulouse,
Institut de Math\'ematiques de Toulouse, Universit\'e de Toulouse, CNRS UMR5219, France. \printead{e2};\\ \printead{u2}}
\end{aug}

\received{\smonth{8} \syear{2011}}
\revised{\smonth{3} \syear{2012}}

%
\begin{abstract}
Given a birth--death process on $\dN$ with semigroup $(P_t)_{t\geq0}$
and a
discrete gradient $\pd_u$ depending on a positive weight $u$, we establish
intertwining relations of the form $\pd_u P_t = Q_t\,\pd_u $, where
$(Q_t)_{t\geq0}$ is the Feynman--Kac semigroup with potential $V_u$ of
another birth--death process. We provide applications when $V_u$ is
nonnegative and uniformly bounded from below, including Lipschitz
contraction and Wasserstein curvature, various functional inequalities, and
stochastic orderings. Our analysis is naturally connected to the previous
works of Caputo--Dai Pra--Posta and of Chen on birth--death processes. The
proofs are remarkably simple and rely on interpolation, commutation, and
convexity.
\end{abstract}

%
\begin{keyword}
\kwd{birth--death process}
\kwd{discrete gradients}
\kwd{Feynman--Kac semigroup}
\kwd{functional inequalities}
\kwd{intertwining relation}
\end{keyword}
\end{frontmatter}

\section{Introduction}\label{sectintro}

Commutation relations and convexity are useful tools for the fine
analysis of
Markov diffusion semigroups \cite{bakemery,bakry2,ledoux}. The
situation is
more delicate on discrete spaces, due to the lack of a chain rule formula
\cite{bobkovledoux,ane,chen1,jp,bobkovtetali,chafai2,caputo,chen3}. In this
work, we obtain new intertwining and sub-commutation relations for a
class of
birth--death processes involving a discrete gradient and an auxiliary
Feynman--Kac semigroup. We also provide various applications of these
relations. Our analysis is naturally related to the curvature condition of
Caputo--Dai Pra--Posta \cite{caputo} and to the Chen exponent of Chen
\cite{chen1,chen3}. More precisely, let us consider a birth--death process
$(X_t)_{t\geq0}$ on the state space $\dN:= \{ 0,1,2, \ldots\}$, that is, a
Markov process with transition probabilities given by
\[
P_t^x (y) = \dP_x (X_t =y) = %
\cases{
\la_x t + \mathrm{o}(t)  & \quad if $y=x+1$, \cr
\nu_x t + \mathrm{o}(t) & \quad  if $y=x-1$, \cr
1- (\la_x + \nu_x) t + \mathrm{o}(t) & \quad  if $y=x$,}
\]
where $\lim_{t\to0}t^{-1}\mathrm{o}(t)=0$. The transition rates $\la$ and
$\nu$ are
respectively called the birth and death rates of the process
$(X_t)_{t\geq
0}$. The process is irreducible, positive recurrent (or ergodic), and
nonexplosive when the rates satisfy to $\la>0$ on $\dN$ and
$\nu>0$ on $\dN^*$ and $\nu_0 = 0$ and
\[
\sum_{x=1}^\infty\frac{\la_0 \la_1 \cdots\la_{x-1}}{\nu_1 \nu
_2 \cdots\nu_x} <\infty \quad
\mbox{and} \quad
\sum_{x=1}^\infty%
{{\biggl(\frac{1}{\la_x}+\frac{\nu_x}{\la_x\la_{x-1}}+\cdots+\frac
{\nu_x\cdots\nu_1}{\la_x\cdots\la_1\la_0}\biggr)}} =
\infty,
\]
respectively. In this case, the unique stationary distribution $\mu$
of the
process is reversible and is given by
%
\begin{equation}\label{eqinvariant}
\mu(x) = \mu(0) \prod_{y=1}^x \frac{\la_{y-1}}{\nu_y} ,\qquad x\in\dN %
\mbox{ with } %
\mu(0) := {{\Biggl(1+\sum_{x=1}^\infty\frac{\la_0\la_1\cdots\la
_{x-1}}{\nu_1\nu_2\cdots\nu_x}\Biggr)}}^{-1} .
\end{equation}
Let us denote by $\cF$ (resp., $\cF_{+}$ and $\cF_{d}$) the
space of
real-valued (resp., positive and nonnegative nondecreasing) functions
$f$ on $\dN$. The associated semigroup $(P_t )_{t\geq0}$ is defined
for any
bounded or nonnegative function $f$ as
\[
P_t f (x) = \dE_x [f(X_t)] = \sum_{y=0}^\infty f(y) P_t^x (y) ,\qquad x\in
\dN.
\]
This family of operators is positivity preserving and contractive on $L^p
(\mu)$, $p\in[1,\infty]$. Moreover, the semigroup is also symmetric in
$L^2(\mu)$ since $\la_x\mu(x) = \nu_{1+x}\mu(1+x)$ for any $x\in
\dN$ (detailed
balance equation). The generator $\cL$ of the process is given for any
$f\in
\cF$ and $x\in\dN$ by
\begin{eqnarray*}
\cL f(x)
& =& \la_x \bigl( f(x+1) -f(x) \bigr) + \nu_x \bigl( f(x-1) -f(x) \bigr) \\
& =& \la_x \,\pd f (x) + \nu_x \,\pd^* f(x),
\end{eqnarray*}
where $\pd$ and $\pd^*$ are, respectively, the forward and backward discrete
gradients on $\dN$:
\[
\pd f(x) %
:= f(x+1)-f(x)\quad \mbox{and}\quad \pd^* f(x) := f(x-1)-f(x) .
\]
Our approach is inspired from the remarkable properties of two special
birth--death processes: the $M/M/1$ and the $M/M/\infty$ queues. The
$M/M/\infty$ queue has rates $\la_x=\la$ and $\nu_x=\nu x$ for positive
constants $\la$ and $\nu$. It is positive recurrent and its stationary
distribution is the Poisson measure $\mu_\rho$ with mean $\rho=\la
/\mu$.
If $\sB_{x,p}$ stands for the binomial distribution of size $x\in\dN
$ and
parameter $p \in[0,1]$, the $M/M/\infty$ process satisfies for every
$x\in\dN$ and $t\geq0$ to the Mehler type formula
%
\begin{equation}\label{eqmehler}
\sL(X_t |X_0 = x) = \sB_{x, \mathrm{e}^{-\nu t}} \ast\mu_{\rho(1-\mathrm{e}^{-\nu t})}.
\end{equation}
The $M/M/1$ queening process has rates $\la_x=\la$ and $\nu_x=\nu
\mathbf{1}_{\dN\setminus\{0\}}$ where $0<\la<\nu$ are constants.
It is a
positive recurrent random walk on $\dN$ reflected at $0$. Its stationary
distribution $\mu$ is the geometric measure with parameter $\rho:=
\la/\nu$
given by $\mu(x) = (1-\rho)\rho^x$ for all $x\in\dN$. A remarkable common
property shared by the $M/M/1$ and $M/M/\infty$ processes is the intertwining
relation
%
\begin{equation}\label{eqLegalite}
\pd\cL= \cL^{V}\, \pd,
\end{equation}
where $\cL^{V}=\cL-V$ is the discrete Schr\"odinger operator with potential
$V$ given by
\begin{itemize}
\item$V(x) := \nu$ in the case of the $M/M/\infty$ queue
\item$V(x) := \nu\mathbf{1}_{\{0\}}(x)$ for the $M/M/1$ queue.
\end{itemize}
Since $V\geq0$ in these two cases, the operator $\cL^{V}$ is the generator
of a birth--death process with killing rate $V$ and the associated Feynman--Kac
semigroup $(P_t^{V})_{t\geq0}$ is given by
\[
P_t^{V} f(x) %
= \dE_x \biggl[ f(X_t) \exp\biggl(-\int_0^t V(X_s)\, \mathrm{d}s \biggr) \biggr].
\]
The intertwining relation \eqref{eqLegalite} is the infinitesimal
version at
time $t=0$ of the semigroup intertwining
%
\begin{equation}\label{eqegalite}
\pd P_t f (x) = P_t^{V}\, \pd f (x) %
= \dE_x \biggl[ \pd f(X_t) \exp\biggl( - \int_0^t V(X_s)\, \mathrm{d}s
\biggr) \biggr] .
\end{equation}
Conversely, one may deduce
\eqref{eqegalite} from \eqref{eqLegalite} by using a semigroup
interpolation. Namely, if we consider $s\in[0,t]\mapsto J(s) :=
P_s^{V}\, \pd
P_{t-s} f$ with $V$ as above, then \eqref{eqegalite} rewrites as $J(0) =
J(t)$ and \eqref{eqegalite} follows from \eqref{eqLegalite} since
\[
J'(s) = P_s^{V} ( \cL^{V}\, \pd P_{t-s} f - \pd\cL P_{t-s} f ) =0.
\]

In Section \ref{sectcommut}, we obtain by using semigroup
interpolation an
intertwining relation similar to \eqref{eqegalite} for more general
birth--death processes. By using convexity as an additional ingredient,
we also
obtain sub-commutation relations. These results are new and have
several applications
explored in Section \ref{sectappli}, including Lipschitz contraction and
Wasserstein curvature (Section \ref{sectwass}), functional inequalities
including Poincar\'e, entropic, isoperimetric and
transportation-information inequalities (Section
\ref{sectFI}), hitting time of the origin for the $M/M/1$ queue (Section
\ref{secthit}), convex domination and stochastic orderings (Section~\ref{sectconvdom}).

\section{Intertwining relations and sub-commutations}\label{sectcommut}

Let us fix some $u \in\cF_{+}$. The $u$-modification of the original
process $(X_t)_{t\geq0}$ is a birth--death process $(X_{u, t})_{t\geq
0}$ with
semigroup $(P_{u,t})_{t\geq0}$ and generator $\cL_u$ given by
\[
\cL_u f(x) = \la^u_x \,\pd f (x) + \nu^u_x \,\pd^* f(x),
\]
where the birth and death rates are respectively given by
\[
\la^u_x := \frac{u_{x+1}}{u_x} \la_{x+1}
\quad\mbox{and}\quad
\nu^u_x := \frac{u_{x-1}}{u_x} \nu_x .
\]
One can check that the measure $\la u^2\mu$ is symmetric for
$(X_{u,t})_{t\geq0}$. As consequence, the process $(X_{u,t})_{t\geq
0}$ is
positive recurrent if and only if $\la u^2$ is $\mu$-integrable. From
now on, we restrict to the minimal solution corresponding to the
forward and
backward Kolmogorov equations given as follows: for any function $f\in
\cF$ with finite support and $t\geq
0$,
\[
\frac{\mathrm{d}}{\mathrm{d}t} P_{u,t} f%
= P_{u,t} \cL_u f %
= \cL_u P_{u,t} f,
\]
cf. \cite{chen2}, Theorem 2.21. In order to justify in all
circumstances the computations present in these notes, we need to
extend these identities to bounded functions $f$. Although it is not
restrictive for the backward equation, the forward equation is more
subtle and requires an additional integrability assumption. From now
on, we always assume that the transition rates $\lambda^u$ and $\nu
^u$ and also the potential $V_u$ are $P_{u,t}$ integrable.

We define the discrete gradient $\pd_u$ and the
potential $V_u$ by
\[
\pd_u := (1/u)\,\pd%
\quad\mbox{and} \quad%
V_u (x) := %
\nu_{x+1} - \nu^u_x +\la_x - \la^u_x.
\]
Let $\vphi\dvtx
\dR\to\dR$ be a smooth convex function such that for some constant
$c>0$, and
for all $r\in\dR$,
%
\begin{equation}\label{eqphi}
\vphi'(r)r \geq c\vphi(r).
\end{equation}
In particular, the behavior at infinity is at least polynomial of
degree $c$.

Let us state our first main result about intertwining and sub-commutation
relations between the original process $(X_t)_{t\geq0}$ and its
$u$-modification $(X_{u, t})_{t\geq0}$. To the knowledge of the
authors, this
result was not known. A connection to Chen's results on birth--death processes
\cite{chen2} is given in Section \ref{sectappli} in the sequel.

\begin{theo}[(Intertwining and sub-commutation)]\label{theocommutation}
Assume that the process is irredu\-cible, nonexplosive and that the potential
$V_u$ is lower bounded. Let $f \in\cF$ be such that $\sup_{y\in\dN
} \vert
\pd_u f (y) \vert<\infty$, and let $x\in\dN$ and $t\geq0$. Then the
following intertwining relation holds:
%
\begin{equation}\label{eqintert}
\pd_u P_t f (x) = P_{u,t}^{V_u} \,\pd_u f (x) = \dE
_x \biggl[ \pd_u f(X_{u,t}) \exp\biggl( - \int_0^t V_u (X_{u,s})\,
\mathrm{d}s \biggr) \biggr].
\end{equation}
Moreover, if $V_u\geq0$ then we have the sub-commutation relation
%
\begin{equation}\label{eqfeynmancommutation}
\vphi( \pd_u P_t f )(x) %
\leq\dE_x \biggl[ \vphi( \pd_u f) (X_{u,t}) %
\exp\biggl( - \int_0^t c V_u ( X_{u,s})\, \mathrm{d}s \biggr) \biggr] .
\end{equation}
\end{theo}

\begin{pf}
The key point is the following intertwining relation
%
\begin{equation}\label{eqcommutu}
\pd_u \cL= \cL_u^{V_u} \,\pd_u,
\end{equation}
where $\cL_u$ is the generator of the $u$-modification process
$(X_{u,t})_{t\geq0}$ and $\cL_u^{V_u} :=\cL_u-V_u$ is the discrete
Schr\"odinger operator with potential $V_u$. Note that the relation
\eqref{eqcommutu} is somewhat similar to \eqref{eqLegalite} and follows
by simple computations. To prove \eqref{eqintert} from \eqref{eqcommutu},
we proceed as we did to obtain \eqref{eqegalite} from \eqref{eqLegalite}.
If we define
\[
s\in[0,t]\quad\mapsto\quad J(s) := P_{u,s}^{V_u}\, \pd_u P_{t-s} f ,
\]
then \eqref{eqintert} rewrites as $J(0) = J(t)$. Hence, it suffices to show
that $J$ is constant. By \cite{chen1}, we know that if $\pd_u f$ is bounded
then $\pd_u P_{t-s} f$ is also bounded. Hence, using the Kolmogorov equations
and \eqref{eqcommutu}, we obtain
\[
J'(s) = P_{u,s} ^{V_u} ( \cL_u^{V_u}\, \pd_u P_{t-s} f - \pd_u \cL
P_{t-s}f ) \\
= 0,
\]
yielding to the intertwining relation \eqref{eqintert}.

Now let us
prove the sub-commutation relation \eqref{eqfeynmancommutation}
by adapting the previous interpolation method, under the additional
assumption $V_u \geq0$. Denoting
\[
s\in[0,t]\quad\mapsto\quad J_c(s) := P_{u,s}^{c V_u} \varphi(\pd_u P_{t-s} f ),
\]
then \eqref{eqfeynmancommutation} rewrites as $J_c(0) \leq J_c(t)$. Hence
let us show that $J_c$ is a nondecreasing function. Since $\varphi(\pd_u
P_{t-s} f )$ is bounded, we have by the Kolmogorov equations:
\[
J_c '(s) = P_{u,s} ^{cV_u} (T),
\qquad\mbox{where }
T = \cL_u^{cV_u} \vphi(\pd_u P_{t-s} f) - \vphi'(\pd_u P_{t-s}
f)\, \pd_u \cL P_{t-s}f .
\]
Letting $g_u = \pd_u P_{t-s} f$, we obtain, by using \eqref{eqcommutu},
\begin{eqnarray*}
T & =& \cL_u^{cV_u} \vphi(g_u) - \vphi'(g_u) \cL_u^{V_u} g_u \\
  & =& \la^u \bigl( \pd\vphi(g_u) %
- \vphi'(g_u)\,\pd g_u \bigr) %
+ \nu^u \bigl( \pd^* \vphi(g_u) %
- \vphi'(g_u)\,\pd^* g_u \bigr) %
+ V_u \bigl( \vphi'(g_u) g_u %
- c\vphi(g_u) \bigr) \\
&=&\la^u A^\vphi(g_u,\pd g_u)+\nu^u A^\vphi(g_u,\pd^* g_u)
+ V_u \bigl( \vphi'(g_u) g_u %
- c\vphi(g_u) \bigr),
\end{eqnarray*}
where $A^\vphi(r,s)=\vphi(r+s)-\vphi(r)-\vphi'(r)s$ is the so-called
$A$-transform of $\vphi$ studied in \cite{chafai2} also known in convex
analysis as the Bregman divergence associated to $\vphi$ \cite{bregman}.
Note that $g_u+\pd g_u=g_u(\cdot+1)$ and $g_u+\pd^* g_u=g_u(\cdot
-1)$. Now,
since $\vphi$ is convex, we have $A^\vphi\geq0$. Moreover, using
\eqref{eqphi} and $V_u\geq0$ we obtain that $T\geq0$. Finally, we
get the
desired result since the Feynman--Kac semigroup $(P_{u,t}^{cV_u})_{t\geq0}$
is positivity preserving.
\end{pf}

\begin{remark}[(Ergodic condition)]\label{remergodic}
The potential $V_u$ in Theorem \ref{theocommutation} is assumed to be lower
bounded. When it is positive, the so-called Chen exponent $\inf_{y\in
\dN}
V_u (y)$ is related to the exponential ergodicity of the original process
$(X_t)_{t\geq0}$, cf. \cite{chen1}. However, identity $(\ref{eqintert})$
does not require such an ergodic assumption. A nice study of the exponential
decay of birth--death processes was recently studied by Chen in \cite{chen3},
with special emphasis on nonergodic situations including transient
cases.
\end{remark}

\begin{remark}[(Case of equality)]\label{remergodic}
According to the proof of Theorem \ref{theocommutation}, the assumption
$V_u \geq0$ can be dropped if the convex function $\varphi$ realizes the
equality in \eqref{eqphi}. Such an observation was expected since in this
case the use of H\"older's inequality in \eqref{eqintert} entails the
desired result.
\end{remark}

\begin{remark}[(Propagation of monotonicity)]\label{remmonotone}
The identity \eqref{eqintert} provides a new proof of the propagation of
monotonicity \cite{stoyan}, Proposition 4.2.10: if $f \in\cF_{d}$ then
$P_tf \in\cF_{d}$ for all $t\geq0$. See Section \ref{sectconvdom}
for an
interpretation in terms of stochastic ordering.
\end{remark}

\begin{remark}[(Other gradients)]
Theorem \ref{theocommutation} possesses a natural analogue for the discrete
backward gradient $\pd^*$. We ignore if there exists a useful ``balanced''
intertwining relation involving a combination of both forward and backward
gradients.
\end{remark}

\begin{remark}[(Higher dimensional spaces)]
The extension of Theorem \ref{theocommutation} to higher dimensional
discrete processes such as queuing networks or interacting particles
systems arising in statistical mechanics is a very natural question, but
seems to be technically difficult. However, a first step has been emphasized
by Wu in his study of functional inequalities for Gibbs measures
through the
Dobrushin uniqueness condition: see step 1 in the proof of Proposition 2.5
in \cite{wu}.
\end{remark}

Our second new result below complements the previous one for the case $u=1$.
Let $\mathcal{I}$ be an open interval of $\dR$ and let
$\varphi\dvtx\mathcal{I}\to\dR$ be a smooth convex function such that
$\varphi''
>0$ and $-1/\varphi''$ is convex on $\mathcal{I}$. Following the
notations of
\cite{chafai2}, we define on the convex subset $\cA_\cI:= \{ (r,s)
\in\dR^2
\dvt (r,r+s) \in\cI\times\cI\}$ the nonnegative function $B^\vphi$ on
$\cA_\cI$ by
\[
B^\vphi(r,s) := \bigl( \vphi'(r+s)-\vphi'(r) \bigr) s,\qquad (r,s) \in
\cA_\cI.
\]
By Theorem 4.4 in \cite{chafai2}, $B^\vphi$ is convex on $\cA_\cI$. Some
interesting examples of such functionals will be given in
Section \ref{sectFI} below.
%
\begin{theo}[(Sub-commutation for $\boldsymbol{1}$-modification)]\label{theobicommutation}
Assume that the process is irreducible and nonexplosive.
If the transition rate $\la$ is
nonincreasing and $\nu$ is nondecreasing then for any function $f \in
\cF$
such that $\sup_{y\in\dN} \vert\pd f(y) \vert<\infty$ and for any
$t\geq0$,
%
\begin{equation}\label{eqbicommutation}
B^\vphi( P_t f , \pd P_t f ) \leq P_{1,t}^{V_1} B^\vphi(f,\pd f),
\end{equation}
where the nonnegative potential is $V_1 := \pd(\nu-\la)$.
\end{theo}

\begin{pf}
Under our assumption, the two processes $(X_t)_{t\geq0}$ and
$(X_{1,t})_{t\geq0}$ are nonexplosive. By using standard approximation
procedures, one may assume that $f$ has finite support. If we define
$s\in[0,t]\mapsto J (s) := P_{1,s}^{V_1} B^\vphi(P_{t-s} f,\pd
P_{t-s} f)$
we see that \eqref{eqbicommutation} rewrites as $J(0) \leq J(t)$.
Denote $F
= P_{t-s} f$ and $G = \pd P_{t-s} f = \pd F$. Since $ B^\vphi(F,G)$ is
bounded, the Kolmogorov equations are available and using
\eqref{eqcommutu} with the constant function $u=1$, we have $J'(s) =
P_{1,s}^{V_1} (T)$ with
\begin{eqnarray*}
T & = & \cL_1^{V_1} B^\vphi(F,G) %
- \frac{\pd}{\pd x} B^\vphi(F,G) \cL F %
- \frac{\pd}{\pd y} B^\vphi(F,G) \cL_1^{V_1} G \\
& = &\la^1 \,\pd B^\vphi(F,G) %
- \la\frac{\pd}{\pd x} B^\vphi(F,G)\, \pd F %
- \la^1 \frac{\pd}{\pd y} B^\vphi(F,G)\, \pd G \\
&&{} + \nu^1\, \pd^* B^\vphi(F,G) %
- \nu\frac{\pd}{\pd x} B^\vphi(F,G)\, \pd^* F %
- \nu^1 \frac{\pd}{\pd y} B^\vphi(F,G)\, \pd^* G \\
&&{} + \pd(\nu-\la) \biggl( \frac{\pd}{\pd y} B^\vphi(F,G) G %
- B^\vphi(F,G) \biggr) \\
& \geq& \pd\nu\biggl( \frac{\pd}{\pd y} B^\vphi(F,G) G %
- B^\vphi(F,G) \biggr) \\
&&{} - \pd\la\biggl( \frac{\pd}{\pd y} B^\vphi(F,G) G %
- \frac{\pd}{\pd x} B^\vphi(F,G) G - B^\vphi(F,G) \biggr) ,
\end{eqnarray*}
and where in the last line we used the convexity of the bivariate function
$B^\vphi$. Moreover, since the birth and death rates $\la$ and $\nu$ are
respectively, nonincreasing and nondecreasing on the one hand, and using
once again convexity on the other hand, we get
\[
\frac{\pd}{\pd y} B^\vphi(F,G) G \geq
\cases{\displaystyle
\frac{\pd}{\pd x} B^\vphi(F,G) G + B^\vphi(F,G), \vspace*{2pt}\cr
 B^\vphi(F,G)
}
\]
from which we deduce that $T$ is nonnegative and thus $J$ is
nondecreasing.
\end{pf}

\begin{remark}[(Diffusion case)]\label{remdiff}
Actually, the intertwining relations above have their counterpart in
continuous state space, as suggested by the so-called Witten Laplacian
method used for the analysis of Langevin-type diffusion processes, see for
instance Helffer's book \cite{helffer}. Let $\cA$ be the generator of a
one-dimensional real-valued diffusion $(X_{t})_{t\geq0}$ of the type
\[
\cA f = \si^2 f''+ bf',
\]
where $f$ and the two functions $\si,b$ are sufficiently smooth. Given a
smooth positive function $a$ on $\dR$, the gradient of interest is
$\nabla_a
f = a f'$. Denote $(P_t)_{t\geq0}$ the associated diffusion semigroup.
Then it is not hard to adapt to the continuous case the argument of
Theorem \ref{theocommutation} to show that the following intertwining
relation holds:
\[
\nabla_a P_tf (x) %
= \dE_x \biggl[ \nabla_a f(X_{a,t}) \exp\biggl( - \int_0^t V_a ( X_{a,s})
\, \mathrm{d}s \biggr) \biggr] .
\]
Here $(X_{a,t})_{t\geq0}$ is a new diffusion process with generator
\[
\cA_a f = \si^2 f'' + b_a f'
\]
and drift $b_a$ and potential $V_a$ given by
\[
b_a := 2\si\si' +b - 2\si^2 \frac{a'}{a}
\quad\mbox{and}\quad
V_a := \si^2 \frac{a''}{a} - b' + \frac{a'}{a} b_a.
\]
In particular, if the weight $a=\si$, where $\si$ is assumed to be
positive, then the two processes above have the same distribution and by
Jensen's inequality, we obtain
\[
\vert\nabla_\si P_tf (x) \vert%
\leq\dE_x \biggl[ \vert\nabla_\si f (X_{t}) \vert%
\exp\biggl( - \int_0^t %
\biggl( \si\si'' -b' + b \frac{\si'}{\si} \biggr)%
( X_{s})\, \mathrm{d}s \biggr) \biggr] .
\]
Hence under the assumption that there exists a constant $\rho$ such that
\[
\inf\si\si'' -b' + b \frac{\si'}{\si} \geq\rho,
\]
then we get $\vert\nabla_\si P_tf \vert\leq \mathrm{e}^{-\rho t} P_t \vert
\nabla_\si f \vert$. This type of sub-commutation relation is at the heart
of the Bakry--\'Emery calculus \cite{bakemery,bakry2,ledoux}. See also
\cite{malrieutalay} for a nice study of functional inequalities for the
invariant measure under the condition $\rho=0$. However, as we will
see in
Remark \ref{remdiff2} below, such a choice of the weight is not really
adapted when studying the optimal constant in the Poincar\'e inequality.
\end{remark}

\section{Applications}\label{sectappli}

This section is devoted to applications of Theorems \ref
{theocommutation} and
\ref{theobicommutation}.

\subsection{Lipschitz contraction and Wasserstein curvature}\label{sectwass}

Theorem \ref{theocommutation} allows to recover a result of Chen
\cite{chen1} on the contraction property of the semigroup on the space of
Lipschitz functions. Indeed, the intertwining (\ref{eqintert}) can be
used to
derive bounds on the Wasserstein curvature of the birth--death process, without
using the coupling technique emphasized by Chen. For a distance $d$ on
$\dN$,
we denote by $\cP_d (\dN)$ the set of probability measures $\xi$ on
$\dN$
such that $\sum_{x\in\dN} d(x,x_0)\xi(x) <\infty$ for some (or equivalently
for all) $x_0 \in\dN$. We recall that the Wasserstein distance
between two
probability measures $\mu_1, \mu_2 \in\cP_d (\dN)$ is defined by
%
\begin{equation}\label{eqdistwass}
\cW_d (\mu_1 , \mu_2) %
= \inf_{\gamma\in\Marg(\mu_1, \mu_2)}\int_\dN\int_\dN d(x,y)
\gamma(\mathrm{d}x,\mathrm{d}y) ,
\end{equation}
where $\Marg(\mu_1, \mu_2)$ is the set of probability measures on
$\dN^2$ such
that the marginal distributions are $\mu_1$ and $\mu_2$,
respectively. The
Kantorovich--Rubinstein duality \cite{villani}, Theorem 5.10, gives
%
\begin{equation}\label{eqKR}
\cW_d (\mu_1,\mu_2) =\sup_{g\in\Lip_1 (d)} \int_\dN g\, \mathrm{d}(\mu_1 -
\mu_2) ,
\end{equation}
where $\Lip(d)$ is the set of Lipschitz function $g$ with respect to the
distance $d$, that is,
\[
\Vert g \Vert_{\Lip(d)} %
:=\mathop{\sup_{x,y \in\dN}}_{x\neq y} \frac{\vert
g(x)-g(y)\vert}{d(x,y)}
<\infty,
\]
and $\Lip_1 (d)$ consists of 1-Lipschitz functions. We assume that the kernel
$P_t^x \in\cP_d (\dN)$ for every $x\in\dN$ and $t\geq0$ so that the
semigroup is well-defined on $\Lip(d)$. The Wasserstein curvature of
$(X_t)_{t\geq0}$ with respect to a given distance $d$ is the optimal
(largest) constant $\si$ in the following contraction inequality:
%
\begin{equation}\label{eqcontractlip}
\Vert P_t \Vert_{\Lip(d) \to\Lip(d)} \leq \mathrm{e}^{-\si t } ,\qquad t\geq0.
\end{equation}
Here $\Vert P_t \Vert_{\Lip(d) \to\Lip(d)}$ denotes the supremum of
$\Vert
P_t f \Vert_{\Lip(d)}$ when $f$ runs over $\Lip_1 (d)$. It is actually
equivalent to the property that
\[
\cW_d (P_t ^x,P_t ^y) \leq \mathrm{e}^{-\si t } d(x,y) ,\qquad %
x,y \in\dN, t\geq0.
\]
If the optimal constant is positive, then the process is positive recurrent
and the semigroup converges exponentially fast in Wasserstein distance
$\cW_{d}$ to the stationary distribution $\mu$ \cite{chen2},
Theorem 5.23.

Let $\rho\in\cF_{+}$ be an increasing function and define $u \in
\cF_{+}$ as $u_x := \rho(x+1) - \rho(x)$. The metric under consideration
in the forthcoming analysis is
\[
d_u (x,y) = \vert\rho(x) - \rho(y) \vert.
\]
Hence, $u$ remains for the distance between two consecutive points. In
particular, the space of functions $f$ for which the intertwining
relation of
Theorem \ref{theocommutation} is available is actually $\Lip(d_u)$.
Then it
is shown in \cite{chen1,joulin} by coupling arguments that the Wasserstein
curvature $\si_u$ with respect to the distance $d_u$ is given by the Chen
exponent, that is,
\[
\si_u %
= \inf_{x\in\dN} \nu_{x+1} %
- \nu_x \frac{u_{x-1}}{u_x} %
+ \la_x %
- \la_{x+1} \frac{u_{x+1}}{u_x} .
\]
The following corollary of Theorem \ref{theocommutation} allows to recover
this result via an intertwining relation.

\begin{corol}[(Contraction and curvature)]\label{corolwass}
Assume that the potential $V_u$ is lower bounded. Then
with the notations of Theorem \ref{theocommutation}, for any $t\geq0$,
%
\begin{equation}\label{eqcontraction}
\Vert P_t \Vert_{\Lip(d_u) \to\Lip(d_u)} = \Vert P_t \rho\Vert
_{\Lip(d_u)} = \sup_{x\in\dN} \dE_x \biggl[ \exp\biggl( - \int
_0^t V_u (X_{u ,s})\, \mathrm{d}s \biggr) \biggr] .
\end{equation}
In particular, the contraction inequality \eqref{eqcontractlip} is
satisfied with the optimal constant
%
\begin{equation}\label{eqcheninf}
\si_u=\inf_{y\in\dN} V_{u}(y).
\end{equation}
\end{corol}

\begin{pf}
Let $f \in\Lip_1 (d_u)$ be a 1-Lipschitz function with respect to the
distance $d_u$. For any $y,z \in\dN$ such that $y<z$ (without loss of
generality), we have by the intertwining identity \eqref{eqintert} of
Theorem \ref{theocommutation} and Jensen's inequality,
\begin{eqnarray*}
\vert P_t f(z)-P_t f(y) \vert
& \leq&\sum_{x=y}^{z-1} u_x \vert\pd_u P_t f (x) \vert\\
& \leq&\sum_{x=y}^{z-1} u_x \dE_x \biggl[ \vert\pd_u f(X_{u,t}) \vert
\exp\biggl( - \int_0^t V_{u} (X_{u,s})\, \mathrm{d}s \biggr) \biggr] \\
& \leq& d_u(z,y) \sup_{x\in\dN} \dE_x \biggl[ \exp\biggl( - \int_0^t V_{u}
(X_{u,s})\, \mathrm{d}s \biggr) \biggr] ,
\end{eqnarray*}
so that dividing by $d_u(z,y)$ and taking suprema entail the inequality:
\[
\Vert P_t \Vert_{\Lip(d_u) \to\Lip(d_u) } %
\leq\sup_{x\in\dN} %
\dE_x \biggl[ \exp\biggl( - \int_0^t V_{u} (X_{u,s})\,  \mathrm{d}s \biggr) \biggr].
\]
Finally, since by Remark \ref{remmonotone} the semigroup $(P_t)_{t\geq0}$
propagates monotonicity, the right-hand side of the latter inequality is
nothing but $\Vert P_t \rho\Vert_{\Lip(d_u)} $, showing that the supremum
over $\Lip_1 (d_u)$ is attained for the function $\rho$. The proof of
equation \eqref{eqcontraction} is achieved.

To establish \eqref{eqcheninf}, note that it suffices to get part
$\leq$
since the other inequality follows from \eqref{eqcontraction}. Applying
\eqref{eqintert} to the function $\rho$ which is trivially in $ \Lip_1
(d_u)$, we have for all $x\in\dN$,
\[
\si_u \leq- \frac{1}{t} \log\dE_x \biggl[ \exp\biggl( - \int_0^t V_u (X_{u,s})
\, \mathrm{d}s \biggr) \biggr] ,\qquad t\geq0,
\]
and taking the limit as $t\to0$ entails the inequality $\si_u \leq V_u
(x)$, available for all $x\in\dN$. The proof of \eqref{eqcheninf} is now
complete.
\end{pf}

\begin{remark}[(Pointwise gradient estimates for the Poisson equation)]
The argument used in the proof of Corollary \ref{corolwass} allows
also to
obtain pointwise gradient estimates for the solution of the Poisson equation
at the heart of Chen--Stein methods \cite
{barbour,brownxia,barbourxia,schuhmacher}. More precisely, let us
assume that $d_u$ is such that $\rho
\in L^1 (\mu)$. For any centered function $f\in\Lip_1 (d_u)$, let us
consider the Poisson equation $-\cL g = f$, where the unknown is $g$. Then
under the assumption $\si_u >0$, there exists a unique centered solution
$g_f \in\Lip(d_u)$ to this equation given by the formula $g_f =
\int_0^\infty P_t f\, \mathrm{d}t$. We have for any $x\in\dN$ the following
estimate (compare with \cite{liuma}, Theorem 2.1):
\begin{eqnarray*}
\sup_{f \in\Lip_1 (d_u) } \vert\pd g_f (x)\vert
& = &\sup_{f \in\Lip_1 (d_u) } u_x \int_0^\infty\vert\pd_{u} P_t
f(x) \vert\, \mathrm{d}t\\
& =& u_x \int_0^\infty\pd_{u} P_t \rho(x)\, \mathrm{d}t \\
& =& u_x \int_0^\infty\dE_x \biggl[ \exp\biggl(-\int_0^t V_{u} (X_{u,s})\, \mathrm{d}s \biggr)
\biggr]\, \mathrm{d}t \\
& \leq&\frac{u_x}{\si_u} .
\end{eqnarray*}
\end{remark}

\subsection{Functional inequalities}\label{sectFI}

Theorems \ref{theocommutation} and \ref{theobicommutation} allow to
establish a whole family of discrete functional inequalities. We define the
bilinear symmetric form $\Gamma$ on $\cF$ by
\[
\Ga(f,g) := \tfrac{1}{2}{{\bigl(\cL(fg)-f\cL g -g\cL f\bigr)}} %
= \tfrac{1}{2}{{(\la\,\pd f\,\pd g + \nu\,\pd^* f \,\pd^* g)}}.
\]
Under the positive recurrence assumption, the associated Dirichlet form acting
on its domain $\cD(\cE_\mu) \times\cD(\cE_\mu) $ is given by
\[
\cE_\mu(f,g) := \frac{1}{2}\int_\dN\Ga(f,g)\, \mathrm{d}\mu%
= \int_\dN\la\,\pd f\,\pd g\, \mathrm{d}\mu,
\]
where the second equality comes from the reversibility of the process. Here
the domain $\cD(\cE_\mu)$ corresponds to the subspace of functions
$f\in
L^2(\mu)$ such that $\cE_\mu(f,f)$ is finite. The stationary distribution
$\mu$ is said to satisfy the Poincar\'e inequality with constant $c$
if for
any function $f \in\cD(\cE_\mu) $,
%
\begin{equation}\label{eqpoincare}
c \Var_\mu(f) \leq\cE_\mu(f,f),
\end{equation}
where $\Var_\mu(f) := \mu(f^2)-\mu(f)^2$ and $\mu(f):= \int_\dN f\,
\mathrm{d}\mu$.
The optimal (largest) constant $c_\rP$ is the spectral gap of $\cL$,
that is, the
first nontrivial eigenvalue of the operator $-\cL$. The constant
$c_\rP$
governs the $L^2(\mu)$ exponential decay to the equilibrium of the semigroup:
for all $f\in L^2(\mu)$ and $t\geq0$,
\[
\Vert P_t f - \mu(f) \Vert_{L^2(\mu)} %
\leq \mathrm{e}^{-c_\rP t } \Vert f - \mu(f) \Vert_{L^2(\mu)}.
\]
Several years ago, Chen used a coupling method which provides the following
formula for the spectral gap:
\[
c_\rP= \sup_{u\in\cF_{+}} \si_u,
\]
where $\si_u$ is the Wasserstein curvature of Section \ref{sectwass}
or, in
other words, the Chen exponent. It corresponds to Theorem 1.1 in \cite{chen1},
equation (1.4). The following corollary of Theorem \ref{theocommutation}
allows to recover the $\geq$ part of Chen's formula.

\begin{corol}[(Spectral gap and Wasserstein curvatures)]\label{corolpoincare}
Assume that there exists some function $u\in\cF_{+}$ such that the
associated Wasserstein curvature $\si_u $ is positive. Then the
Poincar\'e inequality \eqref{eqpoincare} holds with constant $\sup
_{u\in
\cF_{+}}\si_u$, or in other words
\[
c_\rP\geq\sup_{u\in\cF_{+}} \si_u.
\]
\end{corol}

\begin{pf}
Since there exists some function $u\in\cF_{+}$ such that the Wasserstein
curvature $\si_u$ is positive, the process is positive recurrent. By
Proposition 6.59 in \cite{chen2}, the subspace of $ \cD(\cE_\mu)$ consisting
of functions with finite support is a core of the Dirichlet form and
thus we
can assume without loss of generality that $f$ has finite support. We have
\begin{eqnarray*}
\Var_\mu(f)
&=& - \int_\dN\int_0^\infty\frac{\mathrm{d}}{\mathrm{d}t} (P_tf)^2 \, \mathrm{d}t \, \mathrm{d}\mu\\
&=& -2 \int_\dN\int_0^\infty P_tf \cL P_tf \, \mathrm{d}t \, \mathrm{d}\mu\\
&=& 2 \int_0^\infty\int_\dN\la u^2 (\pd_u P_tf )^2 \, \mathrm{d}\mu\, \mathrm{d}t \\
& \leq& 2 \int_0^\infty \mathrm{e}^{-2\si_u t} \int_\dN\la u^2
P_{u,t} (\pd_u f )^2 \,\mathrm{d}\mu\, \mathrm{d}t ,
\end{eqnarray*}
where in the last line we used Theorem \ref{theocommutation} with the
convex function $\vphi(x) = x^2$. Now the measure $\la u^2 \mu$ is
invariant for the semigroup $(P_{u,t})_{t\geq0}$, so that we have
\begin{eqnarray*}
\Var_\mu(f)
& \leq&2 \int_0^\infty \mathrm{e}^{-2\si_u t} \int_\dN\la u^2
(\pd_u f )^2 \,\mathrm{d}\mu \,\mathrm{d}t \\
& =& \frac{1}{\si_u} \int_\dN\la(\pd f )^2 \, \mathrm{d}\mu\\
& = &\frac{1}{\si_u} \cE_\mu(f,f) ,
\end{eqnarray*}
where in the second line we used $\si_u >0$. The proof of the Poincar\'e
inequality is complete.
\end{pf}

\begin{remark}[($M/M/\infty$ and $M/M/1$)]
The spectral gap of the $M/M/\infty$ and $M/M/1$ processes is well-known
\cite{chen1}. Corollary \ref{corolpoincare} allows to recover it easily.
Indeed, in the $M/M/\infty$ case, the value $c_\rP=\nu$ can be
obtained as
follows: choose the constant weight $u=1$ to get $c_\rP\geq\nu$, and
notice that the equality holds for affine functions. For a positive
recurrent $M/M/1$ process, that is, $\la<\nu$, we obtain $c_\rP\geq
(\sqrt{\la}-\sqrt{\nu})^2$ by choosing the weight $u_x := (\nu/\la
)^{x/2}$,
whereas the equality asymptotically holds in \eqref{eqpoincare} as
$\kappa
\to\sqrt{\nu/\la}$ for the functions $\kappa^x$, $x\in\dN$. We conclude
that $c_\rP=(\sqrt{\la}-\sqrt{\nu})^2$.
\end{remark}
%
\begin{remark}[(Alternative method for $M/M/1$)]\label{remaltern}
In the $M/M/1$ case, let us recover the bound $c_\rP\geq
(\sqrt{\la}-\sqrt{\nu})^2$ by using a different method. Letting
$\rho(x) :=
x$ for $x\in\dN$ and $g=f-f(0)$ for a given function $f \in\cD(\cE
_\mu)$, we
have
\begin{eqnarray*}
\int_\dN g ^2\, \mathrm{d}\mu
& =& \frac{1}{\nu- \la} \int_\dN g^2 (-\cL\rho) \, \mathrm{d}\mu\\
& =& \frac{1}{\nu- \la} \cE_\mu(g^2,\rho) \\
& =& \frac{\la}{\nu- \la} \int_\dN\pd(g^2)\, \pd\rho\, \mathrm{d}\mu\\
& =& \frac{\la}{\nu- \la} \int_\dN\bigl( 2g\, \pd f + (\pd f)^2 \bigr) \, \mathrm{d}\mu\\
& \leq&\frac{\la}{\nu- \la} \Biggl( 2 \sqrt{\int_\dN g^2\, \mathrm{d}\mu} \sqrt
{\int_\dN(\pd f)^2\, \mathrm{d}\mu} + \int_\dN(\pd f)^2\, \mathrm{d}\mu\Biggr),
\end{eqnarray*}
where in the last inequality we used Cauchy--Schwarz' inequality.
Solving this
polynomial of degree 2 entails the inequality
\[
\int_\dN g ^2 \, \mathrm{d}\mu
\leq
\frac{\la}{(\sqrt{\la} - \sqrt{\nu} )^2 } \int_\dN(\pd f)^2
\, \mathrm{d}\mu.
\]
Finally using the inequality $ \Var_\mu(f) \leq\int_\dN g ^2\, \mathrm{d}\mu
$, we get
the result.
\end{remark}
%
\begin{remark}[(Diffusion case)]\label{remdiff2}
As mentioned in Remark \ref{remdiff}, the argument above leading to the
Poincar\'e inequality might be extended to the positive recurrent diffusion
case. In particular, under the same notation we obtain the following lower
bound on the Poincar\'e constant
\[
c_\rP\geq\sup_{a} \inf_{x\in\dR} V_a (x) ,
\]
where the supremum is taken over all positive $\mathscr{C}^2 $
function $a$ on
$\dR$. Note that up to the transformation $a\to1/a$, such a formula was
already obtained by Chen and Wang in \cite{chenwang} through their
Theorem 3.1, equation $(3.4)$, by using a coupling approach somewhat similar
to that emphasized by Chen in the discrete case.
\end{remark}

Theorem \ref{theobicommutation} allows to derive functional
inequalities more
general than the Poincar\'e inequality. Let $\cI$ be an open interval
of $\dR$
and for a smooth convex function $\vphi\dvtx\cI\to\dR$ such that $\vphi
'' >0$ and
$-1/\vphi''$ is convex on $\cI$, we define the $\varphi$-entropy of a
sufficiently integrable function $f\dvtx\dN\to\cI$ as
\[
\Ent_\mu^\vphi(f) = \mu( \vphi(f) ) - \vphi( \mu(f) ).
\]
Following \cite{chafai1}, we say that the stationary distribution $\mu$
satisfies a $\varphi$-entropy inequality with constant $c>0$ if for any
$\cI$-valued function $f\in\cD(\cE_\mu)$ such that $\vphi'(f)\in
\cD(\cE_\mu)$,
%
\begin{equation}\label{eqphientropy}
c \Ent_\mu^\vphi(f) \leq\cE_\mu( f, \vphi'(f) ).
\end{equation}
See, for instance, \cite{chafai2} for an investigation of the
properties of
$\varphi$-entropies. The $\varphi$-entropy inequality \eqref{eqphientropy}
is satisfied if and only if the following entropy dissipation of the semigroup
holds: for any sufficiently integrable $\cI$-valued function $f$ and every
$t\geq0$,
\[
\Ent_\mu^\vphi(P_t f) \leq \mathrm{e}^{-ct} \Ent_\mu^\vphi(f).
\]
We have the following corollary of Theorem \ref{theobicommutation}.

\begin{corol}[(Entropic inequalities and Wasserstein curvature)]\label{corolphientropy}
If the birth rate $\la$ is nonincreasing and the Wasserstein curvature
$\si_1$ (with the constant weight $u=1$) is positive, then the
$\varphi$-entropy inequality \eqref{eqphientropy} holds with constant
$\si_1$.
\end{corol}

\begin{pf}
As in the proof of Corollary \ref{corolpoincare} the assertion $\si_1>0$
entails the positive recurrence of the process. Moreover, we assume once
again that the $\cI$-valued function $f$ has finite support. By
reversibility, we have
\begin{eqnarray*}
\Ent_\mu^\vphi(f)
& =& \int_\dN\bigl( \vphi(P_0 f) - \vphi(\mu(f)) \bigr)\, \mathrm{d}\mu\\
& =& - \int_\dN\int_0^\infty\frac{\mathrm{d}}{\mathrm{d}t} \vphi(P_t f)\, \mathrm{d}t \,\mathrm{d}\mu\\
& =& - \int_0^\infty\int_\dN\vphi'(P_t f) \cL P_t f \,\mathrm{d}\mu\, \mathrm{d}t \\
& =& \int_0^\infty\int_\dN\la\,\pd P_t f\, \pd\vphi'(P_t f)\, \mathrm{d}\mu \,\mathrm{d}t
\\
& =& \int_0^\infty\int_\dN\la B^\vphi( P_t f , \pd P_t f
) \,\mathrm{d}\mu\, \mathrm{d}t ,
\end{eqnarray*}
where $B^\vphi$ is as in Theorem \ref{theobicommutation} (the
identity $\pd
g \,\pd\vphi'(g) = B^\vphi(g,\pd g)$ comes from $g+\pd g=g(\cdot+1)$).
Using now Theorem \ref{theobicommutation} together with the invariance of
the measure $\la\mu$ for the 1-modification semigroup
$(P_{1,t})_{t\geq
0}$, we obtain
\begin{eqnarray*}
\Ent_\mu^\vphi(f)
& \leq&\int_0^\infty\int_\dN \mathrm{e}^{-\si_1 t} \la P_{1,t} B^\vphi( f
, \pd f ) \,\mathrm{d}\mu\, \mathrm{d}t \\
& = &\int_0^\infty\int_\dN \mathrm{e}^{-\si_1 t} \la B^\vphi( f , \pd f )
\,\mathrm{d}\mu\, \mathrm{d}t \\
& =& \frac{1}{\si_1} \int_\dN\la B^\vphi( f , \pd f )\, \mathrm{d}\mu\\
& =& \frac{1}{\si_1} \cE_\mu( f, \vphi'(f) ) .
\end{eqnarray*}
\upqed\end{pf}

\begin{remark}[(Examples of entropic inequalities)]
The constant in the $\vphi$-entropy inequality provided by
Corollary \ref{corolphientropy} is not optimal in general (compare for
instance with the Poincar\'e inequality of Corollary \ref{corolpoincare}
when $\vphi(r)=r^2 $ with $\cI=\dR$). The choice $\vphi(r) = r\log
r$ with
$\cI=(0,\infty)$ allows us to recover the modified $\log$-Sobolev inequality
of \cite{caputo}, Theorem 3.1: for any positive function $f\in\cD
(\cE_\mu)$
such that $\log f \in\cD(\cE_\mu)$,
%
\begin{equation}\label{eqmlsi}
\si_1 \Ent_\mu^\vphi(f) \leq\cE_\mu(f,\log f).
\end{equation}
Note that beyond this entropic inequality, it is proved in \cite{caputo}
that the entropy is convex along the semigroup (a careful reading of the
proof in \cite{caputo} suggests that it simply boils down to
commutation and
convexity of $A$ transforms!). For the $M/M/\infty$ process, the
estimate of
Corollary \ref{corolphientropy} is sharp since $\si_1 = \nu$ and the
equality in \eqref{eqmlsi} holds as $\alpha\to\infty$ for the function
$x\in\dN\mapsto \mathrm{e}^{\alpha x}$. Note that the $M/M/1$ process and its
invariant distribution, which is geometric, do not satisfy a modified
$\log$-Sobolev inequality. Another $\vphi$-entropy inequality of
interest is
that obtained when considering the convex function $\phi(r) := r^p$,
$p\in
(1,2]$, with $\mathcal{I}=(0,\infty)$: for any positive function
$f\in
\cD(\cE_\mu)$ such that $f^{p-1} \in\cD(\cE_\mu)$,
%
\begin{equation}\label{eqbeckner}
\mu(f^p) - \mu(f) ^p \leq\frac{p}{\si_1} \mathcal{E}_\mu(f,f^{p-1}).
\end{equation}
Such an inequality has been studied in \cite{bobkovtetali} in the case of
Markov processes on a finite state space and also in \cite{chafai2}
for the
$M/M/\infty$ queuing process. In particular, it can be seen as an
interpolation between Poincar\'e and modified $\log$-Sobolev
inequalities.
\end{remark}

Under the positive recurrence assumption, Theorem \ref{theocommutation}
implies also other type of functional inequalities such as discrete
isoperimetry and transportation-information inequalities. Given a positive
function $u$, we focus on the distance $d_u$ constructed in
Section \ref{sectwass}, where we assume moreover that $\rho\in
\cD(\cE_\mu)$, that is, $\la u^2$ is $\mu$-integrable or, in other
words, the
$u$-modification process $(X_{u,t})_{t\geq0}$ is positive recurrent. The
invariant measure $\mu$ is said to satisfy a weighted isoperimetric inequality
with weight $u$ and constant $h_u >0$ if for any absolutely continuous
probability measure $\pi$ with density $f \in\cD(\cE_\mu)$ with
respect to
$\mu$,
%
\begin{equation}\label{eqwassisop}
h_u \cW_{d_u}(\pi, \mu) %
\leq\int_\dN\la u \vert\pd f \vert \, \mathrm{d}\mu,
\end{equation}
where the Wasserstein distance $\cW_{d_u}$ is defined in \eqref{eqdistwass}
with respect to the distance $d_u$. The terminology of isoperimetry is
employed here because it is a generalization of the classical isoperimetry,
which states that the centered $L^1$-norm is dominated by an energy of
$L^1$-type. Indeed, if the weight $u$ is identically 1, then the
distance $d_1$
between two different points is at least 1, so that \eqref{eqwassisop}
entails
\[
2h_1 \int_\dN\vert f-1\vert \,\mathrm{d}\mu%
= h_1 \cW_{d}(\pi, \mu) %
\leq h_1 \cW_{d_1}(\pi, \mu) %
\leq\int_\dN\la\vert\pd f \vert \,\mathrm{d}\mu,
\]
where $d$ is the trivial distance 0 or 1. Note that the $L^1$-energy
emphasized above differs from the discrete version of the diffusion case,
since our discrete gradient does not derive from $\Ga$.

On the other hand, let us introduce the transportation-information
inequalities emphasized in \cite{glwy}. Let $\alpha$ be a continuous positive
and increasing function on $[0,\infty)$ vanishing at 0. The invariant measure
$\mu$ satisfies a transportation-information inequality with deviation
function $\alpha$ if for any absolutely continuous probability measure
$\pi$
with density $f$ with respect to $\mu$, we have
%
\begin{equation}\label{eqtransp}
\alpha( \cW_{d_u}(\pi,\mu) ) \leq\cI(\pi, \mu) ,
\end{equation}
where the so-called Fisher--Donsker--Varadhan information of $\pi$ with respect
to $\mu$ is defined as
\[
\cI(\pi, \mu) := \cases{
\mathcal{E} _\mu\bigl(\sqrt{f},\sqrt{f}\bigr) & \quad if  $\sqrt{f} \in\cD
(\cE_\mu)$; \cr
\infty & \quad otherwise.}
\]
Note that $\cI(\cdot,\mu)$ is nothing but the rate function
governing the
large deviation principle in large time of the empirical measure $L_t :=
t^{-1} \int_0 ^t \delta_{X_s}\, \mathrm{d}s$, where $\delta_x$ is the Dirac
mass at
point $x$. In other words, the Fisher--Donsker--Varadhan information
rewrites as
the variational identity \cite{chen2}, Theorem 8.8:
\[
\cI(\pi,\mu) = \sup_{V\in\cF_+} \int_{\dN} - \frac{\cL V}{V}
\mathrm{d}\pi.
\]
The interest of the transportation-information inequality resides in the
equivalence with the following tail estimate of the empirical measure
\cite{glwy}, Theorem 2.4: for any absolutely continuous probability measure
$\pi$ with density $f \in L^2(\mu)$ with respect to $\mu$, and any
$g \in
\Lip_1 (d_u)$,
\[
\dP_\pi\bigl( L_t (g) - \mu(g) >r \bigr) %
\leq\Vert f\Vert_{L^2(\mu)} \mathrm{e}^{-\alpha(r)} ,\qquad r>0, t>0.
\]
We have the following corollary of Theorem \ref{theocommutation}.
%
\begin{corol}[(Weighted isoperimetry and transportation-information
inequality)]\label{corolcheeger}
With the notations of Theorem \ref{theocommutation}, assume that the
process is positive recurrent and that the following quantity is
well defined:
\[
\kappa_u := \int_0^\infty%
\sup_{x\in\dN} %
\dE_x \biggl[ \exp\biggl( - \int_0^t V_{u} (X_{u,s})\, \mathrm{d}s \biggr) \biggr] %
\, \mathrm{d}t <\infty.
\]
Then the weighted isoperimetric inequality $(\ref{eqwassisop})$ is satisfied
with constant $h_u = 1/\kappa_u$.
If moreover there exists two constants $\varepsilon>0$ and $\theta>1$ such
that
%
\begin{equation}\label{eqlyap}
(1+\varepsilon) \la_x u_x^2 + (1+1/\varepsilon)\nu_x u_{x-1} ^2 %
\leq-a \bigl( \la_x (\theta-1) + \nu_x (1/\theta-1) \bigr)+ b,\qquad x\in\dN,
\end{equation}
where $a:=a_{\varepsilon,\theta} \geq0$ and $b:=b_{\varepsilon
,\theta} >0$
are two other constants depending on both $\varepsilon$ and $\theta$, then
the transportation-information inequality $(\ref{eqtransp})$ is satisfied
with deviation function
\[
\alpha(r) := \sup_{\varepsilon>0 ,\theta>1} \frac{\sqrt{b^2+2a
(r/\kappa_u)^2}-b }{2a} .
\]
\end{corol}
%
\begin{remark}[(The case of positive Wasserstein curvature)]
In particular, if the Wasserstein curvature $\si_u$ with respect to the
distance $d_u$ is positive, then the process is positive recurrent and
we have
\[
\si_u \cW_{d_u}(\pi, \mu) %
\leq\int_\dN\la u \vert\pd f \vert\, \mathrm{d}\mu%
\quad\mbox{and}\quad %
\alpha( \cW_{d_u}(\pi,\mu) ) \leq\cI(\pi, \mu) ,
\]
with the deviation function
\[
\alpha(r) := \sup_{\varepsilon>0,\theta>1} \frac{\sqrt{b^2+2a
(r\si_u)^2}-b}{2a} .
\]
\end{remark}

\begin{pf*}{Proof of Corollary \protect\ref{corolcheeger}}
For every $f,g\in\cD(\cE_\mu)$ we have, by reversibility,
%
\begin{eqnarray}\label{eqvariational}
\Cov_\mu(f,g)
& := &\int_\dN\biggl( g- \int_\dN g\, \mathrm{d}\mu\biggr) f \, \mathrm{d}\mu\nonumber\\
& \hspace*{2.8pt}= &\int_\dN\biggl( - \int_0^\infty\cL P_t g \, \mathrm{d}t \biggr) f \, \mathrm{d}\mu\nonumber
\\[-8pt]
\\[-8pt]
& \hspace*{2.8pt}=& \int_0^\infty\biggl( - \int_\dN P_t g \cL f \, \mathrm{d}\mu\biggr) \, \mathrm{d}t \nonumber\\
& \hspace*{2.8pt}= &\int_0^\infty\cE_\mu(P_t g , f) \, \mathrm{d}t .\nonumber
\end{eqnarray}
Now, for every probability measure $\pi\ll\mu$ with $\mathrm{d}\pi=f\, \mathrm{d}\mu$,
$f \in\cD(\cE_\mu)$, we get,
using (\ref{eqvariational}),
\begin{eqnarray*}
\cW_{d_u}(\pi, \mu)
& =& \sup_{g\in\Lip_1 (d_u)} \Cov_\mu(f,g) \\
& =& \sup_{g\in\Lip_1 (d_u)} \int_0^\infty\cE_\mu(P_t g , f) \, \mathrm{d}t \\
& =& \sup_{g\in\Lip_1 (d_u)} \int_0^\infty\int_\dN\la u \, \pd f
\,\pd_{u} P_t g \, \mathrm{d}\mu \, \mathrm{d}t \\
& =& \int_0^\infty\int_\dN\la u \vert\,\pd f \vert\,\pd_{u} P_t
\rho \, \mathrm{d}\mu \, \mathrm{d}t \\
& \leq&\int_0^\infty\sup_{x\in\dN} \dE_x \biggl[ \exp\biggl( -
\int_0^t V_{u} (X_{u,s}) \, \mathrm{d}s \biggr) \biggr] \, \mathrm{d}t \int_\dN\la
u \vert\pd f \vert \, \mathrm{d}\mu,
\end{eqnarray*}
where in the last inequality we used Theorem \ref{theocommutation}. This
concludes the proof of the weighted isoperimetric inequality.

Using now Cauchy--Schwarz inequality, reversibility and then (\ref{eqlyap})
with $V_\theta(x):= \theta^x$, $x\in\dN$,
\begin{eqnarray*}
\cW_{d_u}(\pi, \mu) & \leq&\kappa_u \sqrt{\cI(\pi,\mu)} \sqrt{
\int_\dN\la
u^2 \bigl( \sqrt{f(\cdot+1)}+\sqrt{f} \bigr) ^2 \, \mathrm{d}\mu} \\
& \leq&\kappa_u \sqrt{\cI(\pi,\mu)} \sqrt{ \int_\dN\bigl(
(1+\varepsilon)\la
u^2 + (1+1/\varepsilon)\nu u_{\cdot-1} ^2 \bigr) f \, \mathrm{d}\mu} \\
& \leq&\kappa_u \sqrt{\cI(\pi,\mu)} %
\sqrt{ \int_\dN\biggl( - a \frac{\cL V_\theta}{V_\theta} +b \biggr) f \, \mathrm{d}\mu}
\\
& \leq&\kappa_u \sqrt{\cI(\pi,\mu)} \sqrt{a \cI(\pi,\mu)+b},
\end{eqnarray*}
from which the desired transportation-information inequality holds.
\end{pf*}

\begin{remark}[($M/M/\infty$ and $M/M/1$ revisited)]
Corollary \ref{corolcheeger} exhibits optimal functional inequalities, at
least in the $M/M/\infty$ case and its stationary distribution, the Poisson
measure of mean $\la/\nu$. Choosing the weight $u=1$, we obtain the optimal
constant $\hbar_1=\nu$ in the isoperimetric inequality. Indeed,
Corollary \ref{corolcheeger} entails $\hbar_1 \geq\nu$, whereas the other
inequality is obtained by choosing $\pi$ a Poisson measure of different
parameter. For the transportation-information inequality, we recover
Theorem 2.1 in \cite{mawu} since the choice of $a:=
\theta(1+1/\varepsilon)/(\theta-1)$ and $b:=\la(1+\varepsilon+
(1+1/\varepsilon)\theta)$ allows us to obtain the deviation function
$\alpha
(r) := \la(\sqrt{1+\nu r/\la}-1)^2$, $r>0$. Note that it is optimal
in view of Example 4.5 in \cite{guilwu}: for any absolutely continuous
probability measure $\pi$ with square-integrable density with respect to
$\mu$,
\[
\lim_{t\to\infty} \frac{1}{t} \log\dP_\pi\biggl( \frac{1}{t} \int_0
^t X_s \, \mathrm{d}s - \frac{\la}{\nu} >r \biggr) = - \la
\biggl(\sqrt{1+\frac{\nu r}{\la}}-1 \biggr)^2,\qquad r>0.
\]
For the $M/M/1$ process, we have the following inequalities for the optimal
isoperimetric constant $\hbar_u$, with $u_x = (\nu/\la)^{x/2}$ (a quantity
that will appear again in Section \ref{secthit}):
\[
\bigl(\sqrt{\la}-\sqrt{\nu}\bigr)^2 \leq\hbar_u \leq
\bigl(\sqrt{\nu}-\sqrt{\la}\bigr)\sqrt{\nu} .
\]
To get the second inequality, we choose the density $f=(\nu/\la)(1-1
_{\{
0\}})$ and the $1$-Lipschitz test function $g= \rho$. In particular as the
ratio $\la/\nu$ is small, we obtain $\hbar_u \approx\nu$. However, we
ignore if such a process satisfies a transportation-information inequality.
\end{remark}

\subsection{Hitting time of the origin by the $M/M/1$ process}\label{secthit}

Recall that we consider the ergodic $M/M/1$ process ($\la<\nu$) for
which the
stationary distribution is geometric of parameter $\la/\nu$. Since the
process behaves as a random walk outside 0, the ergodic property relies
essentially on its behavior at point $0$. Using the notation of
Theorem \ref{theocommutation}, the intertwining relation \eqref{eqintert}
applied with a positive function $u$ entails the identity
\[
\pd_{u} P_t f (x) %
= \dE_x \biggl[ \pd_{u} f(X_t) \exp\biggl( - \int_0^t V_{u}
(X_{u,s}) \, \mathrm{d}s \biggr) \biggr],
\]
where the potential is given for every $x\in\dN$ by
\[
V_{u} (x) %
:= \nu- \frac{u_{x-1}}{u_x} \nu\mathbf{1}_{\{ x\neq0 \}} +\la-
\frac{u_{x+1}}
{u_x}\la.
\]
Following Robert \cite{robert}, the process $(X_t^y)_{t\geq0}$ is the
solution of the stochastic differential equation
%
\begin{equation}\label{eqsde}
X_0^y=y \quad\mbox{and}\quad
\mathrm{d}X_t^y =  \mathrm{d}N_t^{(\la)} - \mathbf{1}_{\{X_{t-}^y >0\}} \, \mathrm{d}N_t^{(\nu)} ,\qquad t>0,
\end{equation}
where $(N_t^{(\la)})_{t\geq0}$ and $(N_t^{(\nu)})_{t\geq0}$ are two
independent Poisson processes with parameter $\la$ and $\nu$, respectively.
Since the process is assumed to be positive recurrent, the hitting time
of 0,
\[
T_0^y := \inf\{ t>0 \dvt X_t^y = 0\}
\]
is finite almost surely. We have the following corollary of Theorem
\ref{theocommutation}.

\begin{corol}[(Hitting time of the origin for the ergodic $M/M/1$
process)]\label{corolhitting}
Given $x\in\dN$, consider a positive recurrent $M/M/1$ process $(X_t
^{x+1})_{t\geq0}$ starting at point $x+1$, and denote
$(X_{u,t}^x)_{t\geq
0}$ its $u$-modification process starting at point $x$, where
\[
u_x := \biggl(\frac{\nu}{\la} \biggr)^{{x}/{2}} \geq1.
\]
Then we have the following tail estimate: for any $t\geq0$,
\begin{eqnarray*}
\dP(T_0^{x+1} >t )
&=& u_x  \mathrm{e}^{-t ( \sqrt{\la} - \sqrt{\nu} )^2} %
\dE\biggl[\frac{1}{u(X_{u,t}^x)}%
\exp{{\biggl(-\sqrt{\la\nu}\int_0^t\mathbf{1}_{\{0\}}(X_{u,s}^x)\, \mathrm{d}s\biggr)}}
\biggr] \\
&\leq& u_x  \mathrm{e}^{-t ( \sqrt{\la} - \sqrt{\nu} )^2}.
\end{eqnarray*}
\end{corol}

\begin{pf}
Let us use a coupling argument. Let $(X_t ^{x})_{t\geq0}$ be a copy of
$(X_t^{x+1})_{t\geq0}$, starting at point $x$. We assume that it
constructed with respect to the same driving Poisson processes
$(N_t^{(\la
)})_{t\geq0}$ and $(N_t^{(\nu)})_{t\geq0}$ as the process
$(X_t^{x+1})_{t\geq0}$. Hence, the stochastic differential equation
\eqref{eqsde} satisfied by the two coupling processes entails that the
difference between $(X_t^{x+1})_{t\geq0}$ and $(X_t^x)_{t\geq0}$ remains
constant, equal to 1, until time $T_0^{x+1}$, the first hitting time of the
origin by $(X_t^{x+1})_{t\geq0}$. After time $T_0^{x+1}$, the
processes are
identically the same, so that the following identity holds:
\[
X_t^{x+1} = X_t^x + \mathbf{1}_{\{ T_0^{x+1} >t \}} ,\qquad t\geq0.
\]
Since the original process is assumed to be positive recurrent, the coupling
is successful, that is, the coupling time is finite almost surely.
Therefore, we
have for any function $f\in\Lip(d_1)$, where $d_1$ is the distance
$d_1(x,y) = \vert x-y\vert$,
\[
\pd P_t f (x)
= P_t f(x+1) - P_t f (x)
= \dE[ f(X_t^{x+1})-f(X_t^x) ]
= \dE\bigl[ \pd f(X_t^x) \mathbf{1}_{\{ T_0^{x+1} >t \}} \bigr]
\]
so that if we denote the function $\rho(x)=x$, we obtain
\[
\dP(T_0^{x+1} >t )
= \pd P_t \rho(x)
= u_ x \,\pd_{u} P_t \rho(x).
\]
Using now \eqref{eqintert} with the function $u$, we get
\[
\dP(T_0^{x+1} >t ) %
= u_x %
\dE{{\biggl[\frac{1}{u(X_{u,t}^x)}\exp{{\biggl(-\int_0 ^t V_{u}(X_{u,s}^x)\, \mathrm{d}s\biggr)}}\biggr]}},
\]
where $V_u := (\sqrt{\la}-\sqrt{\nu})^2 + \sqrt{\la\nu}\mathbf
{1}_{\{0\}}$.
\end{pf}

\begin{remark}[(Sharpness)] Using a completely different approach, Van Doorn
established in \cite{vandoorn}, through his Theorem 4.2 together with his
Example 5, the following asymptotics
\[
\lim_{t\to\infty} \frac{1}{t} \log\dP(T_0^{x+1} >t) %
= - \bigl(\sqrt{\la}-\sqrt{\nu}\bigr)^2 ,\qquad x\in\dN.
\]
Hence, one deduces that the exponential decay in the result of Corollary
\ref{corolhitting} is sharp. On the other hand, Proposition 5.4 in
\cite{robert} states that $T_0^{x+1}$ has exponential moment bounded as
follows:
\[
\dE\bigl[  \mathrm{e}^{(\sqrt{\la}-\sqrt{\nu})^2 T_0^{x+1}} \bigr] \leq\biggl(
\frac{\nu}{\la} \biggr)^{(x+1)/2} ,
\]
so that Chebyshev's inequality yields a tail estimate somewhat similar to
ours -- although with a worst constant depending on the initial point $x+1$.
\end{remark}

\begin{remark}[(Other approach)]
The proof of Corollary \ref{corolhitting} suggests also a martingale
approach. First, note that we have the identity
\[
-\nu\mathbf{1}_{\{ 0\}} = -\frac{\cL u}{u} - V_{u}
\]
which entails as in the previous proof and since $u \geq1$, the following
computations:
\begin{eqnarray*}
\dP(T_0^{x+1} >t )
& =& \pd P_t \rho(x) \\
& =& \dE\biggl[
\exp\biggl(
- \int_0^t \nu\mathbf{1}_{\{ 0\}} (X_{s}^x) \, \mathrm{d}s \biggr) \biggr] \\
& \leq&\dE\biggl[ u(X_t^x) \exp\biggl( - \int_0^t \biggl( \frac{\cL u}{u} + V_{u}
\biggr)(X_{s}^x) \, \mathrm{d}s \biggr) \biggr] \\
& \leq& u_x  \mathrm{e}^{-t (\sqrt{\la}-\sqrt{\nu})^2} ,
\end{eqnarray*}
since the process $(M_t^u)_{t\geq0}$ given by
\[
M_t^u := u(X_t^x) \exp\biggl(-\int_0^t \frac{\cL u}{u} (X_s^x) \, \mathrm{d}s \biggr) ,\qquad
t\geq0 ,
\]
is a supermartingale. Indeed, denoting
\[
Z_t^u := \exp\biggl(-\int_0^t \frac{\cL u}{u} (X_s^x) \, \mathrm{d}s \biggr),
\]
we have by Ito's formula:
\begin{eqnarray*}
d M_t^u
& =& Z_t^u \, \mathrm{d}u(X_t^x) + u(X_t^x) \, \mathrm{d}Z_t^u \\
& =& Z_t^u \bigl( \mathrm{d} M_t + \cL u (X_t^x)\, \mathrm{d}t\bigr) - u (X_t^x) \frac{\cL u }{u}
(X_t^x) Z_t^u \, \mathrm{d}t \\
& =& Z_t^u \, \mathrm{d} M_t,
\end{eqnarray*}
where $(M_t)_{t\geq0}$ is a local martingale. Therefore, the process $(M_t
^u)_{t\geq0}$ is a positive local martingale and thus a supermartingale.
\end{remark}

\subsection{Convex domination of birth--death processes}\label{sectconvdom}

Let $(X_t^x)_{t\geq0}$ be the $M/M/\infty$ process starting from
$x\in\dN$.
The Mehler-type formula \eqref{eqmehler} states that the random variable
$X_t^x$ has the same distribution as the independent sum of the variable
$X_t^0$, which follows the Poisson distribution of parameter $(\la/\nu)
(1-\mathrm{e}^{-\nu t})$, and a binomial random variable $B_t^{(x)}$ of
parameters $(x,\mathrm{e}^{-\nu t})$. By convention, $B_t^{(0)}$ is assumed to be 0. Hence, we
have for
any nonnegative function $f$ and any $x\in\dN$,
%
\begin{equation}\label{eqmehlerfunct}
\dE[ f(X_t^x) ] = \dE\bigl[ f\bigl(X_t^0 + B_t^{(x)}\bigr)\bigr],\qquad t\geq0.
\end{equation}
Such an identity can be provided by using the commutation relation
\eqref{eqegalite}. Indeed we have
\[
\dE[ f(X_t^{x+1}) ] = (1- \mathrm{e}^{-\nu t}) \dE[ f(X_t^{x})
] +  \mathrm{e}^{-\nu t } \dE[ f(X_t^x +1)] ,
\]
so that a recursive argument on the initial state provides the required
result. An interesting consequence of \eqref{eqmehlerfunct} appears in terms
of concentration properties. For instance, a straightforward computation
entails that for any $\theta\geq0$, we get the following inequality
on the
Laplace transforms
\[
\dE[ \mathrm{e}^{\theta X_t ^x } ] \leq\dE[ \mathrm{e}^{\theta N_t ^{x}} ] ,
\]
where $N_t ^{x}$ is a Poisson random variable with the same mean as
$X_t ^x$.
Therefore, using the exponential Chebyshev inequality entails an upper bound
on the tail of the centered random variable $X_t ^x - \dE[X_t ^x]$,
which is
sharp as $t\to\infty$ (recall that the stationary distribution is Poisson
with parameter $\la/\nu$).

Actually, one may ask if for a more general
birth--death process, the intertwining relation of type \eqref
{eqintert} may
imply a relation similar to \eqref{eqmehlerfunct}. This leads to the notion
of stochastic ordering.

Following the presentation enlighten by Stoyan in \cite{stoyan}, let
us start
with the classical notion of stochastic ordering for integer-valued random
variables. We say that $X$ is stochastically smaller than $Y$, and we
note $X
\leq_d Y$, if for any function $f\in\cF_d$,
\[
\dE[f(X)] \leq\dE[f(Y)] .
\]
Such a relation, as the convex domination introduced below, is a partial
ordering on the set of distribution functions. The interesting feature
of this
stochastic ordering resides in its characterization in terms of
coupling: we
have $X \leq_d Y$ if and only if there exist random variables $X_1$ and
$Y_1$, both defined on the same probability space and with the same
distribution as $X$ and $Y$, respectively, such that $\dP(X_1 \leq X_2
) =1$.
Moreover, it is equivalent to the following comparison between tails:
we have
$X \leq_d Y$ if and only $\dP(X \geq x ) \leq\dP(Y \geq x )$ for
any $x\in
\dR$. In other words, the random variable $X$ takes small values with
a higher
probability than $Y$ does.

Another stochastic ordering of interest is the convex ordering, or convex
domination. Denote $\cF_c$ the subset of $\cF_d$ consisting of nonnegative
nondecreasing convex functions, where in our discrete setting the convexity
of a function $f\dvtx\dN\to\dR$ is understood as $\pd^2 f\geq0$. We
say that
$X$ is convex dominated by $Y$, and we note $X \cleq Y$, if for any function
$f\in\cF_c$,
\[
\dE[f(X)] \leq\dE[f(Y)] .
\]
It is known to be equivalent to the inequality
\[
\dE[ (X-x) ^+ ] \leq\dE[ (Y-x) ^+ ] ,\qquad x\in\dR,
\]
where $a^+ := \max\{ a,0 \}$. Typically, one may deduce from the convex
domination concentration properties like a comparison of moments or Laplace
transforms as in the $M/M/\infty$ case above. Moreover, this refined ordering
might appear for instance when using de-la-Vall\'ee-Poussin's lemma about
uniform integrability of a family of random variables. However, in
contrast to
the $\leq_d $ ordering, the authors ignore if there exists a genuine
interpretation of the convex domination in terms of coupling.

Coming back to our birth--death framework, we observe that if we want to use
the intertwining relation \eqref{eqintert} of Theorem \ref{theocommutation}
in order to obtain stochastic domination, then a first difficulty arises.
Indeed, another birth--death process appears in the right-hand-side of
\eqref{eqintert}, namely the $u$-modification of the original process.
Therefore, let us provide first a lemma which allows us to compare two
birth--death processes with respect to the $\leq_d$ ordering. Although the
result below is somewhat obvious from the point of view of coupling, we give
an alternative proof based on the interpolation method emphasized in
the proof
of Theorem \ref{theocommutation}. See also \cite{stoyan},
Proposition 4.2.10.

\begin{lemme}[(Stochastic comparison of birth--death processes)]\label{lemmedomin}
Let $(X_t^x)_{t\geq0}$ and $(\tilde{X_t^x})_{t\geq0}$ be two birth--death
processes both starting from $x\in\dN$. Denoting respectively $\la,
\nu$
and $\tilde{\la}, \tilde{\nu}$ the transition rates of the associated
generators $\cL$ and $\tilde{\cL}$, we assume that they satisfy the
following assumption:
\[
\tilde{\la} \leq\la\quad\mbox{and} \quad\tilde{\nu} \geq\nu.
\]
Then for every $t\geq0$, the random variable $\tilde{X_t^x}$ is
stochastically smaller than $X_t^x$. In other words, we have $\tilde{X_t^x}
\leq_d X_t^x$.
\end{lemme}

\begin{pf}
Let $g\in\cF_d$ and define the function
$s\in[0,t]\mapsto J(s) := \tilde{P_s} P_{t-s}g$ where $(P_t)_{t\geq
0}$ and
$(\tilde{P_t})_{t\geq0}$ are the semigroups of $(X_t^x)_{t\geq0}$ and
$(\tilde{X_t^x})_{t\geq0}$, respectively. By differentiation, we have
\[
J'(s)
= \tilde{P_s} ( \tilde{\cL} P_{t-s} g- \cL P_{t-s} g )
= \tilde{P_s} \bigl( (\tilde{\la} - \la)\, \pd P_{t-s} g %
+ (\tilde{\nu}-\nu)\, \pd^* P_{t-s} g \bigr),
\]
which is nonpositive since the semigroup $(P_t)_{t\geq0}$ satisfies the
propagation of monotonicity, cf. Remark \ref{remmonotone}. Hence, the
function $J$ is nonincreasing and the desired result holds.
\end{pf}

Now we are able to state the following corollary of Theorem
\ref{theocommutation}, which states a new convex domination involving
decoupled random variables in the right-hand side. However, despite some
particular cases like the $M/M/1$ case for which the convenient coupling
appearing in the proof of Corollary \ref{corolhitting} allows us to extend
the next result to the $\leq_d$ ordering, we ignore if it can be done
in full
generality.

\begin{corol}[(Convex domination)]
Denote $(X_t^y)_{t\geq0}$ a birth--death process starting at some point $y
\in\dN$. We assume that the birth rate $\la$ is nonincreasing and that
there exists $\kappa\geq0$ such that
\[
\pd(\nu-\la) \geq\kappa.
\]
Then for any $t\geq0$ and any $x\in\dN$, the random variable $X_t^{x+1}$
is convex dominated by the independent sum of $X_t ^x$ and a Bernoulli
random variable $Y_t$ of parameter $\mathrm{e}^{-\kappa t} \in(0,1]$. In other
words, we have
\[
X_t^{x+1} \cleq X_t^x + Y_t .
\]
\end{corol}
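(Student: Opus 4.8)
The plan is to use the sub-commutation relation \eqref{eq:feynman_commutation} of theorem~\ref{theo:commutation} with the weight $u=1$ and then to combine it with the stochastic comparison of lemma~\ref{lemme:domin}. First I would check that the hypotheses of theorem~\ref{theo:commutation} are met for $u=1$: the potential is $V_1(x)=\nu_{x+1}-\nu_x+\la_x-\la_{x+1}=\pd(\nu-\la)(x)\geq\kappa\geq0$, so $V_1$ is non-negative (in particular lower bounded), and the $1$-modification process $(X_{1,t})_{t\geq0}$ is well defined. Thus for any smooth convex $\vphi$ satisfying \eqref{eq:phi} with constant $c$ and any $f$ with $\sup_y|\pd f(y)|<\infty$ we have
\[
\vphi\PAR{\pd P_t f}(x)\leq\dE_x\SBRA{\vphi(\pd f)(X_{1,t})\,
\exp\PAR{-c\int_0^t V_1(X_{1,s})\,ds}}\leq e^{-c\kappa t}\,\dE_x\SBRA{\vphi(\pd f)(X_{1,t})},
\]
where the second inequality uses $V_1\geq\kappa$. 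Since for $x\in\dN$ one has $\pd f(x)=f(x+1)-f(x)$, and convex domination $X_t^{x+1}\cleq X_t^x+Y_t$ is equivalent to $\dE[\psi(X_t^{x+1})]\leq\dE[\psi(X_t^x+Y_t)]$ for all $\psi\in\cF_c$, I would now unwind what the display says when applied to $f=\psi$ and translate it into a statement comparing $\dE[\psi(X_t^{x+1})]$ with an expectation on the right-hand side.

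The key observation is that the left-hand side is essentially a discrete second difference of $P_t\psi$: for a convex $\vphi$, evaluating at $f=\psi$ gives $\vphi(\pd P_t\psi(x))$, and one wants to convert this into $\dE[\psi(X_t^{x+1})]-\dE[\psi(X_t^x)]=\pd P_t\psi(x)$ bounded by something. The cleanest route is to take $\vphi$ to be \emph{linear}, i.e. $\vphi(r)=r$, which satisfies \eqref{eq:phi} with $c=1$ (indeed $\vphi'(r)r=r\geq\vphi(r)=r$ — here it is an equality, which by remark~\ref{rem:ergodic} even dispenses with the sign requirement on the potential, but we have $V_1\geq0$ anyway). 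With $\vphi(r)=r$ and $f=\psi\in\cF_c$, note $\pd\psi\in\cF_d$ because $\psi$ is convex, so $\pd\psi$ is non-negative non-decreasing; the sub-commutation then reads
\[
\pd P_t\psi(x)=\dE_x\SBRA{\pd\psi(X_{1,t})\,\exp\PAR{-\int_0^t V_1(X_{1,s})\,ds}}\leq e^{-\kappa t}\,\dE_x\SBRA{\pd\psi(X_{1,t})}.
\]
Now I would compare the $1$-modification process with the original one. The $1$-modification has rates $\la^1_x=\la_{x+1}$ and $\nu^1_x=\nu_x$. Since $\la$ is non-increasing, $\la^1_x=\la_{x+1}\leq\la_x$; and trivially $\nu^1_x=\nu_x\geq\nu_x$. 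Hence lemma~\ref{lemme:domin}, applied with $\tilde\la=\la^1,\tilde\nu=\nu^1$ against $\la,\nu$, gives $X_{1,t}^x\leq_d X_t^x$, so for the non-decreasing function $\pd\psi$ we get $\dE_x[\pd\psi(X_{1,t})]\leq\dE_x[\pd\psi(X_t^x)]=\dE[\psi(X_t^x+1)]-\dE[\psi(X_t^x)]$. Putting the two inequalities together,
\[
\dE[\psi(X_t^{x+1})]-\dE[\psi(X_t^x)]\leq e^{-\kappa t}\PAR{\dE[\psi(X_t^x+1)]-\dE[\psi(X_t^x)]},
\]
which rearranges to $\dE[\psi(X_t^{x+1})]\leq(1-e^{-\kappa t})\dE[\psi(X_t^x)]+e^{-\kappa t}\dE[\psi(X_t^x+1)]=\dE[\psi(X_t^x+Y_t)]$ with $Y_t\sim\mathrm{Bernoulli}(e^{-\kappa t})$ independent of $X_t^x$. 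Since this holds for every $\psi\in\cF_c$, this is exactly $X_t^{x+1}\cleq X_t^x+Y_t$.

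The main obstacle I anticipate is a bookkeeping one rather than a deep one: making sure the function $f=\psi\in\cF_c$ is admissible in theorem~\ref{theo:commutation}, i.e. that $\sup_y|\pd\psi(y)|<\infty$. A general convex non-decreasing $\psi$ need not have bounded increments, so one should first establish the inequality for $\psi$ in a dense-enough subclass — for instance compactly supported convex non-decreasing functions, or the truncations $\psi\wedge n$ and then the functions $(r-x)^+$, which are exactly the ones appearing in the known characterization of $\cleq$ — and then pass to the limit by monotone convergence. Equivalently, one can work directly with $\psi_x(r):=(r-x)^+$ for each $x\in\dN$: these are non-negative, non-decreasing, convex, with $\pd\psi_x\in\{0,1\}$-valued (hence bounded), so theorem~\ref{theo:commutation} applies verbatim, and $X_t^{x+1}\cleq X_t^x+Y_t$ follows from the equivalent formulation $\dE[(X-j)^+]\leq\dE[(Y-j)^+]$ for all $j$. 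A secondary small point is checking the edge behaviour at $0$ (where $\nu_0=0$ and $\pd^*$ degenerates), but the interpolation argument and lemma~\ref{lemme:domin} already handle this, so no extra care is needed beyond citing them.
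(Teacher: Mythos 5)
Your proposal is correct and follows essentially the same route as the paper: apply the intertwining relation \eqref{eq:intert} with $u=1$ (which is exactly the $\vphi(r)=r$ case of the sub-commutation), bound the Feynman--Kac factor by $e^{-\kappa t}$ using $V_1=\pd(\nu-\la)\geq\kappa$, invoke lemma~\ref{lemme:domin} with $\la^1_x=\la_{x+1}\leq\la_x$ and $\nu^1_x=\nu_x$ to transfer the expectation from $X_{1,t}^x$ to $X_t^x$ via monotonicity of $\pd\psi$, and rearrange into the Bernoulli mixture. One small but genuine bonus in your write-up is that you address the admissibility of $\psi$ in theorem~\ref{theo:commutation}: a general element of $\cF_c$ need not satisfy $\sup_y|\pd\psi(y)|<\infty$, and the paper's proof does not comment on this; your reduction to the functions $r\mapsto(r-j)^+$, whose first differences are $\{0,1\}$-valued, is exactly the clean way to circumvent it, since these generate the convex order.
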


\begin{pf}
We have to show that for any function $f\in\cF_c$,
%
\begin{equation}\label{eqrecurrence}
\dE{{[f(X_t^{x+1})]}} \leq\dE{{[f(X_t^x + Y_t)]}}.
\end{equation}
Using the intertwining relation \eqref{eqintert} of
Theorem \ref{theocommutation}, we have since $f$ is nondecreasing:
\begin{eqnarray*}
\dE{{[f(X_t^{x+1})]}}
& \leq&\dE{{[f(X_t^{x})]}}+ \mathrm{e}^{-\kappa t } \dE{{[\pd f(X_{1,t}^x)]}}
\\
& \leq&\dE{{[f(X_t^{x})]}} + \mathrm{e}^{-\kappa t } \dE{{[\pd f(X_t^x)]}}
\\
& = &(1-\mathrm{e}^{-\kappa t})\dE{{[f(X_t^{x})]}} %
+\mathrm{e}^{-\kappa t }\dE{{[f(X_t^x+1)]}} \\
& = &\dE{{[f(X_t^x + Y_t)]}} ,
\end{eqnarray*}
where to obtain the second inequality we used Lemma \ref{lemmedomin} with
the 1-modification process $(X^x _{1,t})_{t\geq0}$ playing the role of
$(\tilde{X_t^x})_{t\geq0}$ since $\pd f$ is nondecreasing (recall
that \mbox{$f
\in\cF_c$}).
\end{pf}

\begin{remark}[(More on convex domination)]\label{remconvex0}
By an easy recursive argument one obtains from the latter result the
following convex domination:
\[
X_t^x \cleq X_t^0 + B_t^{(x)} ,
\]
where $B_t^{(x)}$ is a binomial random variable of parameters
$(x,\mathrm{e}^{-\kappa
t})$, independent from $X_t^0$, as in the case of the $M/M/\infty$
queuing process.
\end{remark}

\section*{Acknowledgements}

The authors are grateful to Arnaud Guillin and Laurent Miclo for their remarks
during the ANR EVOL meeting held in Hammamet (2010). They also thank the
anonymous referees for their helpful suggestions and comments. This
work was
partially supported by the French ANR Project EVOL.



%

\printhistory

\end{document}